\documentclass[12pt]{article}

\usepackage{amsthm}

\usepackage[left=2cm,right=2cm,top=2cm,bottom=2cm,bindingoffset=0cm]{geometry}

\usepackage[T2A]{fontenc}
\usepackage[utf8]{inputenc}
\usepackage[english]{babel}
\usepackage{amsmath}
\usepackage{amsfonts}
\usepackage{mathtools}
\usepackage{graphicx}
\usepackage{systeme}
\usepackage{tikz}
\usepackage{blindtext}
\usepackage{amssymb}
\usepackage{hyperref}
\usepackage{bbm}
\usepackage{cite}
\usepackage[normalem]{ulem}
\graphicspath{ {./} }

\newtheorem{theorem}{Theorem}[section]
\newtheorem{corollary}{Corollary}[section]
\newtheorem{lemma}{Lemma}[section]
\newtheorem*{lemma*}{Lemma}
\newtheorem*{den*}{Denotion}

\newtheorem*{remark*}{Remark}
\newtheorem{construction}{Construction}

\theoremstyle{remark}
\newtheorem*{remark}{Remark}

\theoremstyle{definition}
\newtheorem{definition}{Definition}[section]

\newcommand{\mP}{\mathbf{P}}

\newcommand{\toas}{\xrightarrow{\text{a.s.}}}

\title{Modularity in planted partition model\footnote{This work was supported by a grant for research centers in the field of artificial intelligence, provided by the Analytical Center for the Government of the Russian Federation in accordance with the subsidy agreement (agreement identifier 000000D730321P5Q0002) and the agreement with the Moscow Institute of Physics and Technology dated November 1, 2021 No. 70-2021-00138.}}
\author{Mikhail Koshelev\footnote{Moscow Institute of Physics and Technology, Moscow State University, mkoshelev99@gmail.com}}
\date{}

\begin{document}

\maketitle

\begin{abstract}

We obtain tight bounds on the modularity of PPM graphs in the case of equally sized parts. Moreover, we provide a general method that can help in obtaining bounds for various other models.

\end{abstract}

\section{Introduction}

In this paper we investigate the modularity of the planted partition model (PPM, sometimes also referred to as the stochastic block model). Let us give a formal definition.

\begin{definition}
    For integers $k, n_1, \dotsc, n_k$ and $p, q \in [0, 1]$ consider a graph on vertices $$\{v_{1, 1}, \dotsc, v_{1, n_1}, v_{2, 1}, \dotsc, v_{k, n_k}\},$$ where we draw each edge $(v_{i_1, j_1}, v_{i_2, j_2})$ independently from each other with probability $p$ if $i_1 = i_2$ and probability $q$ in the other case. The sets of vertices $\{v_{i, 1}, \dotsc, v_{i, n_i}\}$ are often referred to as clusters.
\end{definition}

It is worth noting that there is a similar model, which is also sometimes referred to as the block model (or stochastic block model). Instead of fixing the sizes of the clusters, $n$ vertices are assigned to clusters randomly, i.e. we put each node in cluster $i$ with probability $\pi_i$ independently of all other. For further results regarding the block model in this sense see, for example, \cite{SBM1}.

The PPM is a generalization of classical Erdos-Renyi model, where each edge is drawn with probability $p$ independently from all others. In PPM model the appearance of the edges is still independent, but the probabilities might be different for different edges, which makes the model more versatile. 

One of the central questions about the PPM graphs is the question of restoring the original labels given an unlabeled PPM graph (see \cite{Abbe, Lei}). One of the natural ways to do it in case $p > q$ is to obtain a good clusterization of its vertices. In this paper we consider one of the most popular clustering heuristics, modularity, and obtain the bounds on its value for PPM graphs.

Modularity was first introduced in \cite{Item22} by Newman and Girvan. It appeared to be a good measure of how well can we split the graph into clusters in such a way that the edge density in clusters is high and the edge density between the clusters is low. It is known (see, for example, \cite{ModF1, ModF2}), that modularity has some flaws: for instance, it fails to detect small communities. Nevertheless, in the last two decades modularity became a key value for many clustering algorithms (\cite{Alg1, Alg2}) and was widely used in different applications including biology, physics, sociology and computer science. A lot of works in the past decade were dedicated to finding the modularity for various families of graphs (see \cite{Der, Item9a, Item11, McD, Trees, Rai}). 

Let us give a rigorous definition of the modularity. We will first introduce two auxiliary definitions.

\begin{definition} 
Let $\mathcal{A} = \{A_1, A_2, ..., A_k\}$ be a partition of the vertices of a graph $G$ with set of edges equal to $E_G$. The \textit{edge contribution} is then defined as $$\sum\limits_{i = 1}^{k}{\frac{e(A_k)}{e(G)}},$$ where $e(A) = |\{(v_1,v_2) \in E_G | v_1,v_2 \in A\}|$. 

Another (and slightly more convenient) way to write this value is the following one.
$$1 - \sum\limits_{i = 1}^{k}{\frac{e(A_i, \overline{A_i})}{2e(G)}},$$ where $e(A, B) = |\{(v_1,v_2) \in E_G | v_1 \in A, v_2 \in B\}|$ and $\overline{A} = G \setminus A$.

\end{definition}

\begin{definition}
Let $\mathcal{A} = \{A_1, A_2, ..., A_k\}$ be a partition of the vertices of a graph $G$. The value $$\sum\limits_{i = 1}^{k}{\frac{(\sum_{v\in A_k}{\deg(v)})^2}{4e^2(G)}}$$ is called the \textit{degree tax}.
\end{definition}

We are now ready to define the modularity of a partition.

\begin{definition}
Let $\mathcal{A} = \{A_1, A_2, ..., A_k\}$ be a partition of the vertices of a graph $G$. The \textit{modularity of partition} $\mathcal{A}$ is defined as the difference between the edge contribution and the degree tax of this partition:
\begin{equation*}
    q(\mathcal{A}) = \sum_{A \in \mathcal{A}}\frac{e(A)}{e(G)} -
    \sum_{A \in \mathcal{A}}\frac{(\sum_{v \in A}\deg(v))^2}{4e^2(G)}.
\end{equation*}
\end{definition}

Finally, we define the modularity of a graph as maximum over all its partitions.

\begin{definition}
The modularity of graph $G$ is defined as a maximum of modularities over all partitions of the vertices of $G$:
\begin{equation*}
    q^*(G) = \max_{\mathcal{A}}\{q(\mathcal{A})\}.
\end{equation*}
\end{definition}

In this paper, however, we need a slight generalization of the notion of the modularity. Let us consider a complete graph and write a non-negative number $w{(v_1, v_2)}$ (we will refer to it as weight) on each of its edges $(v_1, v_2)$. We can now define modularity just as above, but with slight modifications: $$e(A) := \sum\limits_{v_1,v_2 \in A}w{(v_1, v_2)},$$
$$
\deg(v) := \sum\limits_{v_1 \in G}w{(v, v_1)}.
$$

Note that this is a generalization of the previous definition, as we can put $w{(i, j)} = 1$ whenever there is an edge between $i$ and $j$ and $w{(i, j)} = 0$ if there is no edge between these vertices to obtain a classical definition.

Throughout this paper we consider PPM graphs with $n_1 = \dotsc = n_k = n$. We use the denotation $G(n, k, p, q)$ for them. We obtain bounds on the modularity of $G(n, k, p, q)$ graphs for different parameters $k, p$ and $q$. We also discuss how well does the eigengap heuristic perform on such graphs. 

The main result of the paper is the following bound on the modularity of $G(n, k, p, q)$.

\begin{theorem}
    \label{main}
    Let $n_i, k_i$ be a sequence of positive integers, and let $p_i, q_i \in [0, 1]$. Suppose $$n_ik_i \to \infty, p_i(n_i - 1) + q_i(n_ik_i - k_i) = \omega\left(\sqrt{n_ik_i}\right).$$
    Then almost surely
    \begin{multline*}
        \liminf\limits_{i \to \infty} \frac{(n_i - 1)p_i}{p_i(n_i - 1) + q_i(n_ik_i - k_i)} - \frac{1}{k_i}
        \leq \liminf\limits_{i \to \infty} q^{*}(G(n_i, k_i, p_i, q_i)) \\ \leq \liminf\limits_{i \to \infty} \frac{\max\{p_i, |(n_i - 1)p_i - n_iq_i|\}}{p_i(n_i - 1) + q_i(n_ik_i - k_i)}
    \end{multline*}
    and
    \begin{multline*}
        \limsup\limits_{i \to \infty} \frac{(n_i - 1)p_i}{p_i(n_i - 1) + q_i(n_ik_i - k_i)} - \frac{1}{k_i}
        \leq \limsup\limits_{i \to \infty} q^*(G(n_i, k_i, p_i, q_i)) \leq \\ \leq \limsup\limits_{i \to \infty} \frac{\max\{p_i, |(n_i - 1)p_i - n_iq_i|\}}{p_i(n_i - 1) + q_i(n_ik_i - k_i)}.
    \end{multline*}
\end{theorem}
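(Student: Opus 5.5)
The plan is to prove the lower and upper bounds separately, both by comparing the random graph with its expectation, the weighted complete graph with weight $p$ inside clusters and $q$ between them. Write $N=nk$ and $d=p(n-1)+q(N-k)$; then $d$ is the expected degree of every vertex, and the expected number of edges is $Nd/2$. The hypothesis $d=\omega(\sqrt N)$ says that the expected edge count $Nd/2$ is $\omega(N^{3/2})$. It is exactly what makes the random fluctuations of edge counts, degree sums and cuts negligible relative to $e(G)$, uniformly over all partitions. Once we show $q^*(G_i)-f_i\to 0$ almost surely for appropriate deterministic quantities $f_i$ on each side, the liminf and limsup statements follow at once, since passing to liminf or limsup preserves inequalities.

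\textbf{Lower bound.} Take the planted partition into the $k$ clusters. By Chernoff bounds, $e(G)=(1+o(1))Nd/2$, and the total number of edges inside clusters is $(1+o(1))Nn(n-1)p/2$, provided this mean is large. If the mean is not large, its relative contribution to the modularity is $o(1)$ anyway. So the edge contribution is $(n-1)p/d+o(1)$. For the degree tax, each cluster's degree sum is at most $(1+o(1))nd$ simultaneously for all $k$ clusters. A union bound works here because the exponent is of order $nd\gg\log k$. Hence the tax is $(1+o(1))k(nd)^2/(Nd)^2=1/k+o(1)$. To upgrade "with high probability" to "almost surely", we sum the exponentially small failure probabilities in $Nd$ and apply Borel--Cantelli. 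This summability uses $Nd=\omega(N^{3/2})$ and $N\to\infty$. If $N$ only tends to infinity along a sequence with repetitions, we pass to subsequences.

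\textbf{Upper bound.} Write the modularity via the modularity matrix $B=A-\frac{\mathbf d\mathbf d^T}{2e(G)}$. Then $q(\mathcal A)=\frac1{2e(G)}\sum_j \mathbf 1_{A_j}^TB\mathbf 1_{A_j}$. Now split $B=\bar B+(B-\bar B)$, where $\bar B$ is the modularity matrix of the expected weighted graph: $\bar A=(p-q)(I_k\otimes J_n)+qJ_N-pI_N$ minus nothing further, with degree tax term $\frac{d^2}{Nd}J=\frac dN J$. The eigenvalues of $\bar B$ are easy to compute. On the all-ones vector the eigenvalue is $0$. On vectors constant on clusters and summing to $0$ it is $(n-1)p-nq$. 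On vectors summing to $0$ inside every cluster it is $-p$. Therefore $\|\bar B\|=\max\{p,|(n-1)p-nq|\}$. Since $\sum_j\|\mathbf 1_{A_j}\|^2=N$, the quadratic forms give $\sum_j \mathbf 1_{A_j}^T\bar B\mathbf 1_{A_j}\le N\|\bar B\|$. Dividing by $2e(G)\approx Nd$ gives the claimed main term $\max\{p,|(n-1)p-nq|\}/d$.

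\textbf{Main obstacle: the error term.} We need $\sup_{\mathcal A}\sum_j \mathbf 1_{A_j}^T(B-\bar B)\mathbf 1_{A_j}=o(Nd)$ almost surely. Two pieces are straightforward. The difference between the rank-one degree-tax terms is controlled by degree concentration, since $\|\mathbf d-d\mathbf 1\|^2\approx Nd$ in expectation. The diagonal also differs slightly, which is harmless. The hard piece is the centered adjacency matrix $A-\mathbb EA$. Bounding it by its spectral norm yields $N\cdot O(\sqrt{d}+\sqrt{\log N})$, which is $o(Nd)$ only once $d\to\infty$; it does not directly use the weaker hypothesis. I would therefore avoid spectral norms there. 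Instead I would write $\sum_j e_{A-\mathbb EA}(A_j)$ as a sum over edges inside parts, i.e. half of the total fluctuation minus the cut fluctuations. For a fixed partition this has variance at most $\binom N2\max(p,q)$, roughly $Nd$. A union bound over at most $N^N$ partitions with Bernstein's inequality then gives a deviation of $O(\sqrt{Nd\cdot N\log N}+N\log N)$. That is $o(Nd)$ precisely when $d=\omega(\log N)$, which follows from $d=\omega(\sqrt N)$. Finally, the tail bounds are $\exp(-\omega(N\log N))$, which are summable, so Borel--Cantelli again gives almost-sure convergence.
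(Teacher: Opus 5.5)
Your proof is correct in substance, but it takes a genuinely different route from the paper.

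\textbf{How the two routes differ.} The paper separates a deterministic step from a transfer step. It first bounds the modularity of the expected weighted graph $H(n,k,p,q)$: the upper bound $\lambda/d$ comes from the weighted expander mixing lemma (Lemma~\ref{modb}) with the spectrum of Theorem~\ref{spech}, and the lower bound from the planted partition (Theorem~\ref{dmain}). It then applies Theorem~\ref{ProbMain}, which compares \emph{optimal} modularities of $H$ and the random graph. That comparison merges small parts (Construction~\ref{algorithm}) and uses \emph{relative} concentration of cuts and degree sums of linear-size sets.

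You work with the random modularity matrix directly. Your spectral half is the paper's: $\frac{1}{Nd}\sum_j\mathbf{1}_{A_j}^T\bar{B}\mathbf{1}_{A_j}=q_H(\mathcal{A})$ and $\|\bar{B}\|=\lambda$, so $N\|\bar{B}\|$ is exactly Lemma~\ref{modb}. The difference is in the probabilistic part. You control $\sum_j\mathbf{1}_{A_j}^T(B-\bar{B})\mathbf{1}_{A_j}$ uniformly over all (at most $N^N$) partitions with \emph{absolute} error $o(Nd)$. This has three consequences:
\begin{itemize}
\item tiny parts are harmless, so no merging construction is needed;
\item only $d=\omega(\log N)$ is required;
\item since the bounds are two-sided, you actually get $\sup_{\mathcal{A}}|q_G(\mathcal{A})-q_H(\mathcal{A})|\to0$ a.s.\ for any $d$-regular weighted graph with weights in $[0,1]$ and $d=\omega(\log N)$.
\end{itemize}

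\textbf{What each route buys.} Above all, you never use a spectral gap. Theorem~\ref{ProbMain} assumes $\limsup_n\lambda_n/d_n<1$; it enters Lemma~\ref{LeA} through the lower bound on $e_{H_n}(V,\overline{V})$. This does not follow from the hypotheses of Theorem~\ref{main}: for $q_i=0$ one has $\lambda=d$, and $\lambda/d\to1$ whenever $n_i\ge2$ and $k_iq_i=o(p_i)$. So the paper's closing ``follows directly'' leaves that regime uncovered, and your argument handles it. The paper's route buys a packaged transfer theorem for general $(n,d,\lambda)$ weighted graphs. Your uniform estimate has the same generality and is stronger.

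\textbf{Details to fix.} None of these affects the strategy.
\begin{itemize}
\item \emph{Variance bound.} For a fixed partition, the variance of $\sum_j(e(A_j)-\mathbb{E}e(A_j))$ is at most $\sum_{u<v}p_{uv}=Nd/2$. Your bound $\binom N2\max(p,q)$ is not ``roughly $Nd$'': for $q=0$ it is about $\frac{k}{2}Nd$. With that bound the union over $N^N$ partitions does not close when, e.g., $p=1$, $q=0$ and $d$ is only slightly above $\sqrt N$.
\item \emph{Degree tax.} The estimate $\mathbb{E}\|\mathbf{d}-d\mathbf{1}\|^2\le Nd$ is not enough for an almost sure statement. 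Instead, show $\sum_v|\deg_G(v)-d|=o(Nd)$ a.s.\ by Bernstein plus a union bound over the $2^N$ sign vectors. Then use $\bigl|\sum_jx_j^2-\sum_jy_j^2\bigr|\le2\sum_j|x_j-y_j|=O\bigl(\sum_v|\deg_G(v)-d|/(Nd)\bigr)$, where $x_j,y_j$ are the degree and size shares of $A_j$.
\item \emph{Expected degree.} The expected degree is $(n-1)p+(N-n)q$, not $(n-1)p+q(N-k)$. The $n_ik_i-k_i$ in the statement is a typo; the paper's Remark and Theorem~\ref{dmain} use $nk-n$. Your proof, like the paper's, establishes the bounds with the correct $d$. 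Also, the expected number of within-cluster edges is $N(n-1)p/2$, not $Nn(n-1)p/2$.
\item \emph{Subsequences.} Passing to subsequences does not by itself rescue Borel--Cantelli if values of $N_i$ repeat very often, since summability is needed over $i$. The paper's Theorem~\ref{ProbMain}, indexed by the number of vertices, makes the same implicit assumption.
\end{itemize}
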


Let us discuss the bounds obtained in this theorem. First of all, if $p$ is greater than $|(n - 1)p - nq|$, the theorem gives us that the modularity almost surely tends to zero. This case is not very interesting so we can consider only the other option. Secondly, for $p > q$, the case where we might hope to use the modularity, we can get rid of the modulo sign to obtain the following upper bound
\begin{multline*}
\limsup\limits_{i \to \infty} q^*(G(n_i, k_i, p_i, q_i)) \leq 
 \\ \leq \limsup\limits_{i \to \infty} \frac{(n_i - 1)p_i}{p_i(n_i - 1) + q_i(n_ik_i - k_i)} - \frac{n_iq_i}{p_i(n_i - 1) + q_i(n_ik_i - k_i)} \leq \\ \leq \limsup\limits_{i \to \infty} \frac{(n_i - 1)p_i}{p_i(n_i - 1) + q_i(n_ik_i - k_i)}.
\end{multline*}
 
We can now see, that when, for example, $k_i$ (that is, the number of clusters) tends to infinity, the difference between the lower and the upper bound tends to zero. Thus, the following corollary holds.

\begin{corollary}
    Let $n_i, k_i$ be a sequence of positive integers, and let $p_i, q_i \in [0, 1]$. Suppose $$k_i \to \infty, p_i(n_i - 1) + q_i(n_ik_i - k_i) = \omega\left(\sqrt{n_ik_i}\right), (n_i - 2)p_i > n_iq_i.$$
    Then almost surely
    \begin{equation*}
        \limsup\limits_{i \to \infty} q^*(G(n_i, k_i, p_i, q_i)) = \limsup\limits_{i \to \infty} \frac{(n_i - 1)p_i - n_iq_i}{p_i(n_i - 1) + q_i(n_ik_i - k_i)}
    \end{equation*}
    and
        \begin{equation*}
        \liminf\limits_{i \to \infty} q^*(G(n_i, k_i, p_i, q_i)) = \liminf\limits_{i \to \infty} \frac{(n_i - 1)p_i - n_iq_i}{p_i(n_i - 1) + q_i(n_ik_i - k_i)}.
    \end{equation*}
\end{corollary}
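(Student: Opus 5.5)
The corollary will follow from Theorem~\ref{main} by showing that, under its hypotheses, the lower and upper bounds of the theorem differ by $o(1)$ and both equal $\frac{(n_i-1)p_i-n_iq_i}{D_i}+o(1)$. Throughout write
\[
D_i = p_i(n_i-1)+q_i(n_ik_i-k_i).
\]

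\textbf{Step 1: check the hypotheses of Theorem~\ref{main}.} Since $k_i\to\infty$ and $n_i\ge 1$, we have $n_ik_i\to\infty$. The growth condition $D_i=\omega(\sqrt{n_ik_i})$ is assumed directly. So Theorem~\ref{main} applies.

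\textbf{Step 2: simplify the upper bound.} The assumption $(n_i-2)p_i>n_iq_i$ is equivalent to $(n_i-1)p_i-n_iq_i>p_i\ge 0$. Hence the expression inside the modulus is positive, and the maximum in the theorem is attained by the second term:
\[
\max\{p_i,\,|(n_i-1)p_i-n_iq_i|\}=(n_i-1)p_i-n_iq_i .
\]
The upper bound therefore reads $\limsup$ (respectively $\liminf$) of $U_i=\frac{(n_i-1)p_i-n_iq_i}{D_i}$.

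\textbf{Step 3: compare with the lower bound.} The lower bound is $L_i=\frac{(n_i-1)p_i}{D_i}-\frac1{k_i}$, so
\[
U_i-L_i=\frac1{k_i}-\frac{n_iq_i}{D_i}.
\]
The first term tends to $0$ because $k_i\to\infty$. For the second term we use $D_i\ge q_i k_i(n_i-1)$, which gives
\[
0\le\frac{n_iq_i}{D_i}\le\frac{n_i}{k_i(n_i-1)}\le\frac{2}{k_i}
\]
whenever $n_i\ge2$. If $n_i=1$, the assumption $(n_i-2)p_i>n_iq_i$ becomes $-p_i>q_i$, which is impossible since $p_i,q_i\ge 0$; so indeed $n_i\ge 2$. (If $q_i=0$ the term is simply $0$.) Therefore $U_i-L_i\to0$.

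\textbf{Step 4: conclude.} Because $U_i-L_i\to 0$, we have $\limsup L_i=\limsup U_i$ and $\liminf L_i=\liminf U_i$. The two chains of inequalities in Theorem~\ref{main} then collapse into equalities, which is exactly the statement of the corollary, holding almost surely.

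The only mildly delicate point is the bound on $n_iq_i/D_i$ in Step 3, specifically excluding $n_i=1$. This is handled by the strict inequality assumed in the corollary.
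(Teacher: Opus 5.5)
Your proposal is correct and follows the same route as the paper: you apply Theorem~\ref{main} and observe that, as $k_i\to\infty$, the gap $\frac{1}{k_i}-\frac{n_iq_i}{p_i(n_i-1)+q_i(n_ik_i-k_i)}$ between its lower and upper bounds vanishes. You also spell out two points the paper leaves implicit. First, the hypothesis $(n_i-2)p_i>n_iq_i$, rather than merely $p_i>q_i$, is exactly what makes the maximum equal to $(n_i-1)p_i-n_iq_i$. Second, the bound $n_iq_i/\bigl(p_i(n_i-1)+q_i(n_ik_i-k_i)\bigr)\le n_i/(k_i(n_i-1))$ makes the vanishing of the gap explicit.
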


The paper is organized as follows. In the second section we introduce a new concept of $(n, d, \lambda)$ weighted graphs and discuss the properties of such graphs. In the third section we will prove the theorem that connects the modularity of $(n, d, \lambda)$ weighted graphs with the modularity of random graphs. Finally, in the forth section we apply these results to our model to get the bounds on the modularity of $G(n, k, p, q)$. 

\section{Regular weighted graphs}

\subsection{Definitions}

Consider a graph $G$ with vertex set $V$ and the weight function $w$ that assigns each unordered pair of vertices a non-negative weight (that is, $w: V^2 \to \mathbb{R}^{+}, w(i, j) = w(j, i)$). We call $G$ a $d$-regular weighted graph if the value 
$$
\deg(v) = \sum_{u \in G}w(v, u)
$$
equals $d$ for all $v$.

To define $(n, d, \lambda)$ weighted graphs, we, however, need to define the spectrum of a weighted graph.

\begin{definition}
    Consider a weighted graph $G$ on $n$ vertices. Let $W$ be the matrix such that $W_{i,j} = w(i, j)$. Let $\lambda_1 \geq \lambda_2 \dotsc \geq \lambda_n$ be the sequence of eigenvalues of $W$ (all of them are real, as $W$ is a symmetric matrix). We call this sequence {\it the spectrum} of $G$.
\end{definition}

It is obvious that the matrix $W$ is non-negative. Thus, due to Perron-Frobenius theorem, we have $\lambda_1 \geq \lambda := \max\{|\lambda_2|, \dotsc, |\lambda_n|\}$ and 
$$
\min_{i}\sum_{j = 1}^{n}W_{i, j} \leq \lambda_1 \leq \max_{i}\sum_{j = 1}^{n}W_{i, j}.
$$

In case of a $d$-regular weighted graph all sums $\sum_{j = 1}^{n}W_{i, j} = \deg(i)$ are equal to $d$. This gives us the desired equation $\lambda_1 = d$. We will call such graphs $(n, d, \lambda)$ weighted graphs.

In classical unweighted setting $(n, d, \lambda)$-graphs is a well studied topic. In the next subsection we attempt to transfer some of the results from the unweighted case to our new setting.

\subsection{Properties of $(n, d, \lambda)$-weighted graphs}

The first thing to do is to transfer the main lemma for unweighted $(n, d, \lambda)$-graphs (see \cite{Kriv06}) to weighted setting. 

\begin{lemma}\label{econc}
    Consider an $(n, d, \lambda)$ weighted graph and two (not necessarily disjoint) subsets of its vertices, $A$ and $B$. Let $e(A, B)$ be the sum of weights of edges between $A$ and $B$, where the weights of edges in $A \cap B$ are counted twice. Then
    $$
    \left|e(A, B) - \frac{d|A||B|}{n}\right| \leq \frac{\lambda}{n}\sqrt{|A||B|(n - |A|)(n - |B|)}.
    $$
\end{lemma}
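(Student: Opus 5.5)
We follow the standard expander mixing lemma argument (as in \cite{Kriv06}), with the adjacency matrix replaced by the weight matrix $W$. Write $e(A,B) = \mathbf{1}_A^{T} W \mathbf{1}_B$, where $\mathbf{1}_A$ is the indicator vector of $A$. This bilinear form counts weights inside $A\cap B$ twice, as the statement requires. Since $W$ is symmetric, we fix an orthonormal eigenbasis $u_1,\dots,u_n$ with $Wu_j=\lambda_j u_j$. Because the graph is $d$-regular weighted, $W\mathbf{1}=d\mathbf{1}$, so we may take $u_1=\mathbf{1}/\sqrt{n}$ with $\lambda_1=d$.

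Decompose $\mathbf{1}_A = \frac{|A|}{n}\mathbf{1} + a$ and $\mathbf{1}_B = \frac{|B|}{n}\mathbf{1} + b$, where $a,b\perp \mathbf{1}$. Expanding gives
\[
e(A,B)=\frac{|A||B|}{n^2}\mathbf{1}^TW\mathbf{1} + \frac{|A|}{n}\mathbf{1}^TWb + \frac{|B|}{n}a^TW\mathbf{1} + a^TWb .
\]
The first term equals $\frac{d|A||B|}{n}$, since $\mathbf{1}^TW\mathbf{1}=dn$. The two cross terms vanish: $W\mathbf{1}=d\mathbf{1}$ and $W$ is symmetric, so $\mathbf{1}^TWb = d\,\mathbf{1}^Tb=0$, and similarly for the other.

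It remains to bound $|a^TWb|$. Since $a,b$ lie in $\mathrm{span}(u_2,\dots,u_n)$, we write $a=\sum_{j\ge2}\alpha_ju_j$ and $b=\sum_{j\ge 2}\beta_ju_j$. Then $a^TWb=\sum_{j\ge2}\lambda_j\alpha_j\beta_j$, and Cauchy–Schwarz gives $|a^TWb|\le \lambda\|a\|\|b\|$ with $\lambda=\max_{j\ge2}|\lambda_j|$. By Pythagoras, $\|a\|^2=|A|-\frac{|A|^2}{n}=\frac{|A|(n-|A|)}{n}$, and likewise for $b$. Multiplying yields exactly $\frac{\lambda}{n}\sqrt{|A||B|(n-|A|)(n-|B|)}$.

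The only point needing care is justifying that $\mathbf{1}$ can be taken as the top eigenvector, so that the orthogonal complement of $\mathbf{1}$ is spanned by eigenvectors whose eigenvalues all have modulus at most $\lambda$. This is the main obstacle, since $\lambda$ might be attained with multiplicity. It is resolved by $W$-invariance: $\mathbf{1}$ is an eigenvector with eigenvalue $d$, and because $W$ is symmetric, $\mathbf{1}^\perp$ is $W$-invariant. Hence we can choose the orthonormal eigenbasis to contain $\mathbf{1}/\sqrt n$. The remaining eigenvalues are then $\lambda_2,\dots,\lambda_n$ as a multiset; if $d$ has multiplicity greater than one, then $\lambda=d$ and the bound still holds. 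Everything else is routine algebra.
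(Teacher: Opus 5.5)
Your proof is correct and follows the same route as the paper. The paper likewise decomposes the indicator vectors along the all-ones direction and its orthogonal complement, notes that the cross terms vanish, and bounds the remaining bilinear form by $\lambda\|a\|\|b\|$. Your normalizations are right where the paper's have slips: it writes $\alpha=\langle\chi_A,x\rangle=|A|/n$ for the unit vector $x$, and $\|R_A\|^2=|A|-|A|^2/n^2$. Your $W$-invariance remark also supplies the justification for $\|Wb\|\le\lambda\|b\|$ on $\mathbf{1}^\perp$, which the paper leaves implicit.
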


\begin{proof}
    Let $\chi_{A}$ be the vector such that its $i$-th component is equal to 1 iff the $i$-th vertex belongs to $A$ and all the other components equal 0. Define $\chi_B$ similarly. Then $e(A, B) = \chi^{t}_AW\chi_B$. Let $x$ be a vector $\frac{1}{\sqrt{n}}(1, \dotsc, 1)$. It is straightforward to see that $Wx = dx$, and thus $x$ is an eigenvector with eigenvalue $\lambda_1 = d$. We can write $\chi_A = \alpha x + R_A, \chi_B = \beta x + R_B$, where $\alpha = \langle\chi_A,x\rangle = |A|/n, \beta = \langle\chi_B,x\rangle = |B|/n$. From linear algebra we know that $\langle x, R_B\rangle = \langle x, R_A\rangle = 0$. It is now easy to see that
    $$
    \chi^{t}_AW\chi_B = \alpha\beta x^tWx + R^t_AWR_b = \frac{d|A||B|}{n} + R^t_AWR_B.
    $$
    It remains to prove that $\|R^t_AWR_B\| \leq \frac{\lambda}{n}\sqrt{|A||B|(n - |A|)(n - |B|)}$. This follows from
    \begin{multline*}
        \|R^t_AWR_B\| \leq \|R_A\|\|WR_B\| \leq \lambda\|R_A\|\|R_B\| = \\ = \lambda\sqrt{|A| - |A|^2/n^2}\sqrt{|B| - |B|^2/n^2} = \frac{\lambda}{n}\sqrt{|A||B|(n - |A|)(n - |B|)}.
    \end{multline*}
\end{proof}

We can now use this lemma to prove an analogue of Theorem 4 from \cite{Item25}. 

\begin{lemma}\label{modb}
    For any $(n, d, \lambda)$ weighted graph $G$ we have
    $$
        q^*(G) \leq \frac{\lambda}{d}.
    $$
\end{lemma}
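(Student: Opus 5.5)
We will bound the modularity of an arbitrary partition $\mathcal{A} = \{A_1, \dotsc, A_k\}$ by $\lambda/d$ and then take the maximum. The tool is Lemma~\ref{econc}, applied once to each part with $B = A$.

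First we rewrite both terms of $q(\mathcal{A})$ using regularity. Every vertex has degree $d$, so the total weight is $2e(G) = nd$. The degree sum of a part $A$ is $d|A|$, hence the degree tax of $A$ equals $\frac{d^2|A|^2}{n^2d^2} = \frac{|A|^2}{n^2}$.

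For the edge contribution, Lemma~\ref{econc} with $A = B$ concerns the quantity $e(A,A)$, in which each internal edge is counted twice. It equals $\chi_A^tW\chi_A$, and the lemma gives
$$
e(A,A) \leq \frac{d|A|^2}{n} + \frac{\lambda}{n}|A|(n - |A|).
$$
Dividing by $nd = 2e(G)$ gives the normalized contribution $e(A,A)/(nd)$, which is exactly the edge contribution of $A$. We must be careful that this factor of two matches the paper's $e(A)/e(G)$ convention. With the weighted definition $e(A) = \sum_{v_1,v_2\in A} w(v_1,v_2)$ the sum runs over ordered pairs, so the normalization is consistent. Under the unweighted convention one instead uses the form $1 - \sum e(A,\overline{A})/2e(G)$, which again yields $e(A,A)/(nd)$.

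Combining the two pieces, for each part
$$
\frac{e(A,A)}{nd} - \frac{|A|^2}{n^2} \leq \frac{\lambda}{d}\cdot\frac{|A|(n-|A|)}{n^2}.
$$
Summing over the parts, and writing $a_i = |A_i|/n$ so that $\sum_i a_i = 1$,
$$
q(\mathcal{A}) \leq \frac{\lambda}{d}\sum_i a_i(1 - a_i) = \frac{\lambda}{d}\Bigl(1 - \sum_i a_i^2\Bigr) \leq \frac{\lambda}{d}.
$$
Maximizing over all partitions gives $q^*(G) \leq \lambda/d$.

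The only delicate step is the factor-of-two bookkeeping: we need the edge contribution to equal $\chi_A^tW\chi_A/(nd)$ exactly, and not twice or half of it. Everything else is routine.
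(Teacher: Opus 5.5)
Your proof is correct and is essentially the paper's. The paper applies Lemma~\ref{econc} with $B=\overline{A}$ to lower-bound the cut, using the form $1-\sum_i e(A_i,\overline{A_i})/(dn)$. You instead take $B=A$ to upper-bound $e(A,A)$. Since $e(A,A)+e(A,\overline{A})=d|A|$, these are the same estimate, and both arrive at $\frac{\lambda}{d}\bigl(1-\sum_i |A_i|^2/n^2\bigr)$.

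One correction to your bookkeeping remark. The identity $e(A)/e(G)=e(A,A)/(nd)$ holds when both $e(A)$ and $e(G)$ count unordered pairs, so that $e(G)=nd/2$ — the convention you already used for the degree tax. Reading both over ordered pairs would instead make the degree tax $|A|^2/(4n^2)$, and the lemma would then be false. So justify the factor of two through the unordered convention or the cut form, not through ordered pairs.
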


\begin{proof}
    Consider a partition $\mathcal{A} = \{A_1, \dotsc, A_k\}$ of $G$.
    We know that
    $$
    q(\mathcal{A}) = \sum_{i = 1}^{k}\frac{e(A_i)}{e(G)} - \sum_{i = 1}^{k}\frac{\left(\sum_{v \in A_i}\deg(v)\right)^2}{4e^2(G)} = 1 - \sum_{i = 1}^{k}\frac{e(A_i, \overline{A_i})}{dn} - \sum_{i = 1}^{k}\frac{\left(d|A|\right)^2}{(dn)^2},
    $$
    where $\overline{A_i} = G \setminus A_i$. Using Lemma \ref{econc}, we obtain
    \begin{multline*}
    q(\mathcal{A}) \leq 1 - \sum_{i = 1}^{k}\frac{(d - \lambda)|A|(n - |A|)}{dn^2} - \sum_{i = 1}^{k}\frac{|A|^2}{n^2} = \\ = 1 - \sum_{i = 1}^{k}\frac{d|A|n}{dn^2} + \sum_{i = 1}^{k}\frac{d|A|^2}{dn^2} + \sum_{i = 1}^{k}\frac{\lambda|A|n}{dn^2} - \sum_{i = 1}^{k}\frac{\lambda|A|^2}{dn^2} - \sum_{i = 1}^{k}\frac{|A|^2}{n^2} = \\ = \frac{\lambda}{d} - \sum_{i = 1}^{k}\frac{\lambda|A|^2}{dn^2} \leq \frac{\lambda}{d}.
    \end{multline*}
\end{proof}

\section{Regular weighted graphs and random graphs}

\subsection{Main result}

The main result of this section connects the modularity of a $(n, d, \lambda)$ weighted graph with weights in $[0, 1]$ with the modularity of a corresponding random graph. Let us elaborate on the meaning of the word corresponding.

\begin{definition}
    Let $H$ be a weighted graph on $n$ vertices with $w(i, j) \in [0, 1]$ for all $i, j$. We define the corresponding random graph as a graph on $n$ vertices where each edge $(i, j)$ is drawn with probability $w(i, j)$ independently from all others. 
\end{definition}

We can now state the main theorem.

\begin{theorem}\label{ProbMain}
    Let $H_n$ be a sequence of $(n, d_n, \lambda_n)$-weighted graphs with all weights in $[0, 1]$, and let $H^{\prime}_n$ be the random graph corresponding to $H_n$ for each $n$. Suppose that 
    $$
        d_n = \omega(n^{1/2}), \limsup\limits_{n \to \infty} \frac{\lambda_n}{d_n} < 1,
    $$ 
    then
    $$
        \left|q^{*}(H_n) - q^{*}(H^{\prime}_n)\right| \toas 0, n \to \infty.
    $$
\end{theorem}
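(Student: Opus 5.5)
The plan is to show that, almost surely, the modularity of every partition of the random graph $H'_n$ differs from the modularity of the same partition of $H_n$ by $o(1)$, uniformly over all partitions. Taking maxima on both sides then gives the statement. Write $X = A' - W$, where $A'$ is the adjacency matrix of $H'_n$ and $W$ is the weight matrix of $H_n$. Then $X$ is a symmetric random matrix with independent centred entries above the diagonal, each bounded by $1$ and with variance at most $w(i,j)$. By a standard spectral norm bound for such matrices (F\"uredi--Koml\'os/Vu type, or matrix Bernstein), we have $\|X\| = O(\sqrt{d_n} + \sqrt{\log n})$ with probability $1 - O(n^{-2})$. Since $d_n = \omega(\sqrt n)$, this is $o(d_n)$, and Borel--Cantelli upgrades it to almost sure convergence along $n$.

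The key steps are as follows.

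\textbf{Step 1 (edge counts).} The total weight concentrates: $2e(H'_n) = \mathbf{1}^tA'\mathbf{1}$, and Chernoff gives $2e(H'_n) = dn(1+o(1))$ with summable failure probabilities, because $dn = \omega(n^{3/2})$.

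\textbf{Step 2 (degrees).} The degrees satisfy $\sum_v |\deg'(v) - d| \le \sqrt{n}\,\|X\mathbf{1}\| \le n\|X\| = o(dn)$. Hence for any partition, $\sum_i |D'(A_i) - d|A_i|| = o(dn)$, where $D'(A)$ denotes the degree sum of $A$ in $H'_n$. Since $\sum_i D(A_i) \le dn$, the degree tax changes by $o(1)$ uniformly over all partitions.

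\textbf{Step 3 (edge contribution).} This is the crux. The edge contribution is $\sum_i \chi_{A_i}^t A' \chi_{A_i}/(2e)$, and its deviation from the $W$-version is $\sum_i \chi_{A_i}^tX\chi_{A_i}$. The naive bound $\sum_i \|X\||A_i| = n\|X\| = o(dn)$ already works uniformly over all partitions, whatever the number of parts. This step is the one I expected to be the main obstacle (a union bound over partitions would fail), but the spectral norm handles it cleanly.

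Combining Steps 1--3, $|q_{H'_n}(\mathcal{A}) - q_{H_n}(\mathcal{A})| = o(1)$ uniformly in $\mathcal{A}$, almost surely, so $|q^*(H_n) - q^*(H'_n)| \to 0$. The real difficulty is therefore Step 0, the spectral norm bound $\|X\| = o(d_n)$ almost surely, which must hold even when the weights are small and inhomogeneous. If only $d_n = \omega(\sqrt n)$ is available, I would use the crude bound $\|X\| = O(\sqrt{n})$ with probability $1 - e^{-cn}$. This follows from an $\varepsilon$-net argument: for fixed unit vectors $u, v$, the quantity $u^tXv$ is a sum of independent bounded terms, so Hoeffding gives a tail bound of $e^{-ct^2}$ with $t = C\sqrt n$, and a union bound over a $5^{2n}$-sized net then suffices. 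Since $\sqrt n = o(d_n)$, this is enough, and it is exactly where the hypothesis $d_n = \omega(n^{1/2})$ enters.

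The condition $\limsup \lambda_n/d_n < 1$ seems unnecessary for this argument. I would verify this, or perhaps use it only to ensure that $e(H_n)$ is not degenerate.
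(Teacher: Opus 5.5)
Your argument is correct, and it takes a genuinely different route from the paper.

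\textbf{How the paper argues.} The paper never bounds the noise matrix $X=A'-W$ as a whole. Instead it uses Hoeffding's inequality and a union bound over all $2^n$ vertex subsets to get concentration of three quantities: $e(H'_n)$, the cut weights $e(V,\overline{V})$ for $|V|\in[n/k,(k-1)n/k]$, and degree sums over sets with $|V|\ge n/k$. These estimates say nothing useful about small sets, and a union bound over partitions is far too expensive. So the paper has to pass through Construction~\ref{algorithm}, which merges small parts at a cost of $O(1/k)$ in modularity. It also treats partitions with a giant part separately, and finally lets $k\to\infty$. The hypothesis $\limsup\lambda_n/d_n<1$ is used only in one place: to bound $e_{H_n}(V,\overline{V})$ from below via Lemma~\ref{econc}, so that the cut concentration can be stated as a relative error.

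\textbf{How your argument differs.} Your single estimate $\|X\|=O(\sqrt n)$ comes from the same Hoeffding-plus-union-bound mechanism, but with the union taken over a net of the sphere instead of over subsets. It replaces all of the above. The bounds $|\chi_A^tX\chi_A|\le\|X\|\,|A|$ and $\sum_v|\deg_{H'_n}(v)-d_n|\le n\|X\|$ add up over the parts. They therefore control every partition simultaneously, with no merging and no case analysis. They also give a quantitative statement: $\sup_{\mathcal{A}}|q_{H'_n}(\mathcal{A})-q_{H_n}(\mathcal{A})|=O(\sqrt n/d_n)$ for all large $n$, almost surely.

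You are right that $\limsup\lambda_n/d_n<1$ is superfluous: your proof establishes the theorem without it. With the sharper bound $\|X\|=O(\sqrt{d_n}+\sqrt{\log n})$ that you mention (which holds with summable failure probabilities), the same three steps would go through under a much weaker growth condition on $d_n$ than $\omega(n^{1/2})$.

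What the paper's approach buys is mainly that it reuses the subset-concentration scheme and the mixing lemma it has already set up. Your net argument is no less elementary, and it gives a shorter proof of a stronger, uniform and quantitative result.
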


\begin{remark}
    In this section we consider different graphs, so the denotations like $\deg(v), e(A, B)$ and so on might be ambiguous. To deal with it we write the graph that we refer to as a subscript, e.g. $\deg_H(v), e_{H^{\prime}}(A, b), \dotsc$. 
\end{remark}

In the proof we use the following classical inequality proved in \cite{Hef}.

\begin{lemma}[Hoeffding]\label{Hf}
    \text{ } \\
    Let $X_1, \dotsc, X_m$ be independent random variables such that for each $i$ we have $\mathbf{P}(X_i \in [a_i, b_i]) = 1$ for some $a_i$ and $b_i$. Let $S_{m} = X_1 + \dotsc + X_m$. Then the following inequality holds.
    $$
    \mathbf{P}(|S_{m} - \mathbf{E}S_m| \geq t) < 2\exp\left(-\frac{2t^2}{\sum\limits_{i = 1}^{m}(b_i - a_i)^2}\right).
    $$
\end{lemma}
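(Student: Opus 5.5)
The plan is to follow the standard Chernoff--Cramér route: bound the moment generating function of each centred summand, multiply over independent summands, and then apply Markov's inequality to an exponential. Write $Y_i = X_i - \mathbf{E}X_i$, so that $Y_i \in [a_i - \mathbf{E}X_i,\, b_i - \mathbf{E}X_i]$ almost surely. This is an interval of length $b_i - a_i$ containing $0$, and $\mathbf{E}Y_i = 0$. By symmetry (replace $X_i$ by $-X_i$) it suffices to bound the upper tail $\mathbf{P}(S_m - \mathbf{E}S_m \geq t)$ by $\exp\left(-2t^2/\sum_i (b_i - a_i)^2\right)$. A union bound over the two tails then gives the factor $2$. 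The case $t \le 0$ is trivial, since for $t=0$ the right-hand side equals $2>1$, so we assume $t>0$.

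The key step is Hoeffding's lemma: if $\mathbf{E}Y=0$ and $Y\in[a,b]$ almost surely, then $\mathbf{E}e^{sY}\le e^{s^2(b-a)^2/8}$ for every real $s$. I would prove it by convexity. For $y\in[a,b]$,
$$e^{sy}\le \frac{b-y}{b-a}e^{sa}+\frac{y-a}{b-a}e^{sb}.$$
Taking expectations and using $\mathbf{E}Y=0$ gives $\mathbf{E}e^{sY}\le e^{\varphi(u)}$, where $u=s(b-a)$, $\theta=-a/(b-a)\in[0,1]$, and
$$\varphi(u)=-\theta u+\log(1-\theta+\theta e^{u}).$$
One checks that $\varphi(0)=\varphi'(0)=0$. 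Moreover, $\varphi''(u)=\pi(1-\pi)\le 1/4$, where $\pi=\theta e^u/(1-\theta+\theta e^u)$. Taylor's theorem with remainder then yields $\varphi(u)\le u^2/8$. This calculus step is the main technical obstacle, and I would do it carefully with the Lagrange form of the remainder.

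Given the lemma, fix $s>0$. Independence and Markov's inequality give
$$\mathbf{P}(S_m-\mathbf{E}S_m\ge t)\le e^{-st}\prod_{i}\mathbf{E}e^{sY_i}\le \exp\Bigl(-st+\tfrac{s^2}{8}\sum_i(b_i-a_i)^2\Bigr).$$
Choosing $s=4t/\sum_i(b_i-a_i)^2$ minimises the exponent and gives exactly $\exp\left(-2t^2/\sum_i(b_i-a_i)^2\right)$. Here we assume that $\sum_i(b_i-a_i)^2>0$; otherwise every $X_i$ is almost surely constant and the probability is $0$.

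Finally, we must address the strict inequality "$<$" in the statement. The plan is to observe that the two tail events $\{S_m-\mathbf{E}S_m\ge t\}$ and $\{S_m-\mathbf{E}S_m\le -t\}$ are disjoint for $t>0$. If either tail bound were attained with equality, one would need equality in Markov's inequality, which forces $S_m-\mathbf{E}S_m$ to take only the values $t$ and $-\infty$-like values. Since it has mean zero and is bounded, this is impossible unless that tail has probability $0$. Either way the sum of the two tail probabilities is strictly less than $2\exp(\cdot)$. If this equality analysis proves delicate, the fallback is simply that the paper uses the inequality only asymptotically, where $\le$ suffices.
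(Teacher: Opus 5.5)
The paper gives no proof of this lemma (it is simply quoted from Hoeffding). Your argument, namely Hoeffding's lemma via convexity and $\varphi''=\pi(1-\pi)\le\frac14$, then the Chernoff bound with $s=4t/\sum_i(b_i-a_i)^2$ and a union over the two tails, is the classical proof from that source and is correct.

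Two small corrections. First, for sufficiently negative $t$ the inequality is false (the left side is $1$), so the case $t<0$ is not trivial; $t>0$ and $\sum_i(b_i-a_i)^2>0$ should be stated as implicit hypotheses. Second, the strict inequality needs no fallback. Markov's inequality for the strictly positive variable $e^{sY}$, with $Y=S_m-\mathbf{E}S_m$, is an equality only if $Y=t$ almost surely, which contradicts $\mathbf{E}Y=0<t$. So each one-sided bound is already strict.
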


We also need the following trivial lemma, which follows easily from Borel–Cantelli lemma.

\begin{lemma}\label{BC}
    Let $Q_n$ be a sequence of events such that $\mP(Q_n) \leq e^{-Cn}$ for some positive $C$. Then almost surely only a finite number of $Q_n$ occur.
\end{lemma}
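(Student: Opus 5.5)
The plan is to apply the first Borel--Cantelli lemma directly, so the only thing to check is that the probabilities $\mP(Q_n)$ have a finite sum. Since $C > 0$, we have $0 < e^{-C} < 1$, and the bound $\mP(Q_n) \leq e^{-Cn}$ lets us dominate the series by a geometric series:
$$
\sum_{n = 1}^{\infty} \mP(Q_n) \leq \sum_{n = 1}^{\infty} e^{-Cn} = \frac{e^{-C}}{1 - e^{-C}} < \infty .
$$

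Next I would recall the first Borel--Cantelli lemma: if $\sum_n \mP(Q_n) < \infty$, then $\mP(\limsup_n Q_n) = 0$. Here $\limsup_n Q_n = \bigcap_{N \geq 1}\bigcup_{n \geq N} Q_n$ is exactly the event that infinitely many of the $Q_n$ occur. For completeness, this follows from
$$
\mP\Big(\bigcup_{n \geq N} Q_n\Big) \leq \sum_{n \geq N} \mP(Q_n) \to 0 \quad (N \to \infty),
$$
since the tail of a convergent series tends to zero, together with monotonicity of probability. Hence, with probability one, only finitely many $Q_n$ occur.

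There is no real obstacle here; the only point needing (minimal) care is that $C$ is strictly positive, which is what makes the geometric series converge. It is also worth noting that independence of the $Q_n$ is not required, since the first Borel--Cantelli lemma does not need it. This matters for the later application, where the $Q_n$ will be deviation events for different random graphs $H'_n$, with probabilities bounded via Lemma~\ref{Hf}.
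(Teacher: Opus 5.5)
Your proposal is correct and matches the paper's approach: the paper gives no explicit proof, only noting that the lemma ``follows easily from Borel--Cantelli lemma,'' and your argument fills this in. You bound $\sum_n \mP(Q_n)$ by a convergent geometric series and then apply the first Borel--Cantelli lemma, which needs no independence.
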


Finally, we need an algorithm to modify partition without changing its modularity much.

\begin{construction}\label{algorithm}
    Let $G_n$ be a graph on $n$ vertices, and let $k \geq 2$, $k \in \mathbb{N}$. Let us also fix some positive integer $k$.
    We define an algorithm to construct two sets, $\mathcal{A}^{\prime}$ and $\mathcal{A}_{big}$ from a partition $\mathcal{A} = \{A_1, A_2, \ldots, A_m\}$. We also need an auxiliary set $A_{merged}$ which is initially empty.
    
    Iterate through $A_i, i\in\{1,2,\ldots,m\}$ :
    \begin{itemize}
        \renewcommand{\labelitemi}{\textbullet}
        \renewcommand{\labelitemii}{\textperiodcentered}
        \item If $|A_i| > \frac{n}{k}$, then add $A_i$ to $A^{\prime}$
        \item  If $|A_i| \leq \frac{n}{k}$ :
        \begin{itemize}
            \item $A_{merged} := A_{merged} \cup A_i$
            \item If $|A_{merged}| \geq \frac{n}{k}$, then $A_{merged}$ is added to $\mathcal{A}^{\prime}$ and $A_{merged} := \varnothing$
        \end{itemize}
    \end{itemize}
    
    After iterating through $A_i \in \mathcal{A}$ consider the set $A_{merged}$ :
    \begin{itemize}
        \renewcommand{\labelitemi}{\textbullet}
        \item If $|A_{merged}| > \frac{n}{k}$, then $A_{merged}$ is added to $\mathcal{A}^{\prime}$
        \item If $|A_{merged}| \leq \frac{n}{k}$, then $A_{merged}$ is ignored
    \end{itemize}
    
    Finally, $\mathcal{A}_{big}$ is defined in the following way:
    $$
        \mathcal{A}_{big} := \bigg\{A : A \in \mathcal{A}^{\prime}, |A| > \frac{2n}{k}\bigg\}.
    $$
\end{construction}

\begin{remark}
Note the following key property of this algorithm: any element from $\mathcal{A}_{big}$ is also present in $\mathcal{A}$ and any element from $\mathcal{A} \setminus \mathcal{A}_{big}$ has size of no more than $\frac{2n}{k}$.
\end{remark}

We give the proof of Theorem \ref{ProbMain} is subsection \ref{mproof}. In subsection \ref{pares} we formulate and prove auxiliary concentration inequalities. 

Note that the proofs in this section are almost the same as in section ``Stability of the modularity" in \cite{JGS}. We will however give them here for the sake of completeness.

\subsection{Auxiliary lemmas} \label{pares}

In this section we prove concentration inequalities on $e(H^{\prime}_n), e_{H^{\prime}_n}(V, \overline{V})$ and $\sum\limits_{v \in V}{\deg_{H^{\prime}_n}(v)}$, where $V$ is a subset of vertices of $H^{\prime}_n$ (with some restrictions on its size).

\begin{lemma}
    \label{LeG}
    Consider the sequence $H^{\prime}_n$ from Theorem \ref{ProbMain}. Then almost surely the following inequality holds for all sufficiently large $n$. 
    \begin{equation}
    \label{L1}
    \left|e(H^{\prime}_n) - \frac{d_nn}{2}\right| < \sqrt{\frac{n^{1/2}}{d_n}}\frac{d_nn}{2},
    \end{equation}
    where $d_n$ is the degree of each vertex in corresponding weighted graph $H_n$.
\end{lemma}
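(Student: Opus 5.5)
We write $e(H^{\prime}_n)$ as a sum of independent indicators and apply Hoeffding's inequality (Lemma \ref{Hf}). Concretely, $e(H^{\prime}_n) = \sum_{\{i,j\}} X_{ij}$, where $X_{ij}$ is the indicator that the edge $(i,j)$ is drawn. The $X_{ij}$ are independent, take values in $[0,1]$, and $\mathbf{E}X_{ij} = w(i,j)$. Hence
$$\mathbf{E}\, e(H^{\prime}_n) = \sum_{\{i,j\}} w(i,j) = \frac{1}{2}\sum_{v}\deg_{H_n}(v) = \frac{d_n n}{2},$$
using that $H_n$ is $d_n$-regular. If $H_n$ has loops with nonzero weight, the diagonal terms need minor bookkeeping. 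They contribute at most $n$ to the sum, and $n = o\bigl(\sqrt{n^{1/2}/d_n}\, d_n n\bigr)$ because that quantity is $n^{5/4}d_n^{1/2}$. So they can be absorbed either way, and we assume there are no loops below.

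The number of summands is $m \le \binom{n}{2} < n^2/2$, each with range length $b_i - a_i = 1$. We apply Lemma \ref{Hf} with $t = \sqrt{n^{1/2}/d_n}\,\frac{d_n n}{2}$, so that
$$t^2 = \frac{n^{1/2}}{d_n}\cdot\frac{d_n^2n^2}{4} = \frac{d_n n^{5/2}}{4}.$$
This gives
$$\mathbf{P}\left(\left|e(H^{\prime}_n) - \frac{d_nn}{2}\right| \geq t\right) < 2\exp\left(-\frac{2t^2}{n^2/2}\right) = 2\exp\left(-d_n n^{1/2}\right).$$
Since $d_n = \omega(n^{1/2})$, we have $d_n n^{1/2} \geq n$ for all large $n$. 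The bound is therefore at most $2e^{-n} \le e^{-n/2}$ for large $n$.

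It remains to pass from these probability bounds to an almost sure statement. By Lemma \ref{BC}, applied with $C = 1/2$, almost surely only finitely many of the bad events occur. The finitely many small $n$ where the exponential bound has not yet kicked in do not matter for the Borel–Cantelli sum. So inequality (\ref{L1}) holds for all sufficiently large $n$ almost surely.

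There is no real obstacle here. The only step needing care is confirming that the expectation is exactly $d_n n/2$, which is the loop/diagonal convention issue handled above, and that the Hoeffding exponent grows at least linearly in $n$, which is exactly where the hypothesis $d_n = \omega(n^{1/2})$ is used.
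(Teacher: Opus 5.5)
Your proof is correct and follows the same route as the paper: you apply Hoeffding's inequality to the sum of independent edge indicators and obtain exactly the paper's bound $2\exp\{-n^{1/2}d_n\}$. Since $d_n=\omega(n^{1/2})$, this is at most $e^{-Cn}$, and Lemma \ref{BC} finishes; you also write out the exponent computation and the harmless loop bookkeeping, which the paper leaves implicit.
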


\begin{proof}
From Lemma \ref{Hf} for any fixed $n$ we have 
$$
\mP\left(\left|e(H^{\prime}_n) - \frac{d_nn}{2}\right| \geq \sqrt{\frac{n^{1/2}}{d_n}}\frac{d_nn}{2}\right) \leq 2\exp\left\{-n^{1/2}d_n\right\}.
$$

Since $d_n = \omega(\sqrt{n})$, we get that there exists a positive constant $C$ such that $$\mP\left(\left|e(H^{\prime}_n) - \frac{d_nn}{2}\right| \geq \sqrt{\frac{n^{1/2}}{d_n}}\frac{d_nn}{2}\right) \leq e^{-Cn}.$$ By applying Lemma \ref{BC} we obtain the desired result.
\end{proof}

\begin{lemma}
    \label{LeA}
        Consider the sequence $H^{\prime}_n$ from Theorem \ref{ProbMain} and let $k$ be a positive integer. Then almost surely the following inequality holds for all sufficiently large $n$.
        $$\forall V \subset V_{H^{\prime}_n}, |V| \in \left[\frac{n}{k}, \frac{(k - 1)n}{k}\right] : |e_{H^{\prime}_n}(V, \overline{V}) - e_{H_n}(V, \overline{V})| < \sqrt{\frac{n^{1/2}}{d_n}}e_{H_n}(V, \overline{V}).$$
\end{lemma}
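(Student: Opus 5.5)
We fix $n$ and a single set $V$ with $|V| \in [n/k, (k-1)n/k]$, and write $e_{H^{\prime}_n}(V, \overline{V})$ as a sum of independent Bernoulli indicators $X_{u,v}$, one per pair $u \in V$, $v \in \overline{V}$, with $\mathbf{E}X_{u,v} = w(u,v)$. Then $\mathbf{E}\, e_{H^{\prime}_n}(V, \overline{V}) = e_{H_n}(V, \overline{V})$. The number of summands is $|V||\overline{V}| \le n^2/4$, and each lies in $[0,1]$. Lemma \ref{Hf} with $t = \sqrt{n^{1/2}/d_n}\, e_{H_n}(V, \overline{V})$ gives
$$
\mP\left(|e_{H^{\prime}_n}(V, \overline{V}) - e_{H_n}(V, \overline{V})| \ge t\right) < 2\exp\left(-\frac{2 n^{1/2} e_{H_n}(V, \overline{V})^2}{d_n |V||\overline{V}|}\right).
$$

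The key step is a lower bound on $e_{H_n}(V, \overline{V})$, which comes from Lemma \ref{econc} applied with $A = V$, $B = \overline{V}$. Since $|B| = n-|A|$, it gives
$$
e_{H_n}(V, \overline{V}) \ge \frac{d_n|V||\overline{V}|}{n} - \frac{\lambda_n}{n}|V||\overline{V}| = \frac{(d_n - \lambda_n)|V||\overline{V}|}{n}.
$$
By the hypothesis $\limsup \lambda_n/d_n < 1$ of Theorem \ref{ProbMain}, for large $n$ we have $d_n - \lambda_n \ge \varepsilon d_n$ for some fixed $\varepsilon > 0$. Hence $e_{H_n}(V, \overline{V}) \ge \varepsilon d_n |V||\overline{V}|/n$, and the exponent is at least
$$
\frac{2 n^{1/2}\varepsilon^2 d_n |V||\overline{V}|}{n^2}.
$$
The size restriction on $V$ gives $|V||\overline{V}| \ge \frac{n}{k}\cdot\frac{n}{k}$ up to a constant depending on $k$, namely $|V||\overline{V}| \ge \frac{(k-1)n^2}{k^2}$. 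So the exponent is at least $c_k\, n^{1/2} d_n$ with $c_k > 0$. Since $d_n = \omega(n^{1/2})$, this is $\omega(n)$.

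Next we take a union bound over all at most $2^n$ sets $V$. The probability that some admissible $V$ violates the inequality is at most $2^{n+1}\exp(-c_k n^{1/2} d_n)$. Because $n^{1/2} d_n / n \to \infty$, this is at most $e^{-Cn}$ for some $C>0$ and all large $n$. Lemma \ref{BC} then shows that almost surely only finitely many of these bad events occur, which is the claim.

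The main obstacle is making the union bound beat $2^n$. A naive bound on $e_{H_n}(V,\overline{V})$ would not suffice; it is exactly the expander mixing estimate of Lemma \ref{econc}, combined with the spectral gap, that yields the $\Theta(d_n n)$ lower bound on cuts of linear-size sets. A minor point to check is that $e_{H_n}(V,\overline{V})$ in Lemma \ref{econc} counts each cross pair once, since $V$ and $\overline{V}$ are disjoint, so it matches the definition used here.
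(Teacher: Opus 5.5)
Your proposal is correct and follows the paper's proof essentially step for step: Hoeffding for a fixed $V$, the lower bound $e_{H_n}(V,\overline{V}) \ge (d_n-\lambda_n)|V||\overline{V}|/n$ from Lemma~\ref{econc} combined with the spectral gap, a union bound over at most $2^n$ sets, and Lemma~\ref{BC}. Your bookkeeping is slightly tidier than the paper's, since you carry the factor $|V||\overline{V}| \ge (k-1)n^2/k^2$ explicitly into the exponent rather than folding it loosely into the constant.
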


\begin{proof}
Let us fix some $n$ and bound the probability of the following event for some fixed  $V \subset V_G$: $|e_{H^{\prime}_n}(V, \overline{V}) - e_{H_n}(V, \overline{V})| \geq \sqrt{\frac{n^{1/2}}{d_n}}e_{H_n}(V, \overline{V})$. Taking $t = \sqrt{\frac{n^{1/2}}{d_n}}e_{H_n}(V, \overline{V})$ and applying Lemmas \ref{Hf} and \ref{econc} we obtain 
\begin{multline*}
\mathbf{P}\left(|e_{H^{\prime}_n}(V, \overline{V}) - e_{H_n}(V, \overline{V})| \geq \sqrt{\frac{n^{1/2}}{d_n}}e_{H_n}(V, \overline{V})\right) < 2\exp\left\{-2\frac{n^{1/2}}{d_n}\frac{e^2_{H_n}(V, \overline{V})}{|V||\overline{V}|}\right\} \leq \\ 
\leq 2\exp\left\{-2\frac{n^{1/2}}{d_n}\frac{(d_n-\lambda_n)^2|V||\overline{V}|}{n^2}\right\}.
\end{multline*}
As we know that for some positive $C$ and big enough $n$ we have $d_n - \lambda_n \geq Cd_n$, we get that for all $V : |V| \in \left[\frac{n}{k}, \frac{(k - 1)n}{k}\right]$ the following bound holds.
$$
\mathbf{P}\left(|e_{H^{\prime}_n}(V, \overline{V}) - e_{H_n}(V, \overline{V})| \geq \sqrt{\frac{n^{1/2}}{d_n}}e_{H_n}(V, \overline{V})\right) < 2\exp\left\{-2\frac{n^{1/2}}{d_n}\frac{C^2d_n^2}{k^2}\right\},
$$
or, equivalently,
$$
\mathbf{P}\left(|e_{H^{\prime}_n}(V, \overline{V}) - e_{H_n}(V, \overline{V})| \geq \sqrt{\frac{n^{1/2}}{d_n}}e_{H_n}(V, \overline{V})\right) < 2\exp\left\{-2C_0(k)d_n\sqrt{n}\right\}.
$$
That means that the probability the property we want to prove does not hold is no more than
$$
2^n\exp\left\{-C_0d_n\sqrt{n}\right\} \leq \exp\left\{n\ln 2 - C_0d_n\sqrt{n}\right\} < e^{-C_1d_n\sqrt{n}}.
$$
Since $d_n = \omega(n^{1/2})$, it remains to apply Lemma \ref{BC}.
\end{proof}

\begin{lemma}
    \label{Ldeg}
    Consider the sequence $H^{\prime}_n$ from Theorem \ref{ProbMain} and let $k$ be a positive integer. Then almost surely the following inequality holds for all sufficiently large $n$. 
    \begin{multline*}
    \forall V \subset V_{H^{\prime}_n}, |V| \geq \frac{n}{k} : \left|\sum_{v \in V}{\deg_{H^{\prime}_n}(v)} - \sum_{v \in V}{\deg_{H_n}(v)}\right| < \sqrt{\frac{n^{1/2}}{d_n}}\sum_{v \in V}{\deg_{H_n}(v)}  = \\ = \sqrt{\frac{n^{1/2}}{d_n}}d_n|V|.
    \end{multline*}
\end{lemma}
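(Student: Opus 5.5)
We follow the same scheme as in Lemmas \ref{LeG} and \ref{LeA}: fix $n$ and a set $V$, bound the probability of the bad event by Hoeffding, take a union bound over all at most $2^n$ sets, and finish with Lemma \ref{BC}. The first step is to write the random sum as a sum of independent indicators. For $V\subset V_{H'_n}$ we have
$$\sum_{v\in V}\deg_{H'_n}(v)=2e_{H'_n}(V)+e_{H'_n}(V,\overline V).$$
Each edge $(i,j)$ is an independent Bernoulli variable with mean $w(i,j)$. In this sum an edge with both ends in $V$ has coefficient $2$, an edge with exactly one end in $V$ has coefficient $1$, and all other edges have coefficient $0$. The expectation is therefore $\sum_{v\in V}\deg_{H_n}(v)=d_n|V|$.

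Next we apply Lemma \ref{Hf} with $X_{ij}$ ranging in $[0,2]$ or $[0,1]$. The sum of squared ranges is at most $4\binom{|V|}{2}+|V|(n-|V|)\le 2|V|n$. With $t=\sqrt{n^{1/2}/d_n}\,d_n|V|$ this gives
$$\mP(\text{bad for }V)<2\exp\left\{-\frac{2\,n^{1/2}d_n|V|^2}{2|V|n}\right\}=2\exp\left\{-\frac{n^{1/2}d_n|V|}{n}\right\}\le 2\exp\left\{-\frac{n^{1/2}d_n}{k}\right\},$$
where the last step uses $|V|\ge n/k$.

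The union bound then gives total failure probability at most $2^{n+1}\exp\{-n^{1/2}d_n/k\}$. Since $d_n=\omega(n^{1/2})$, we have $n^{1/2}d_n/k=\omega(n)$, so $n\ln2$ is absorbed and the bound is $e^{-Cn}$ for large $n$. Lemma \ref{BC} then finishes the proof.

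The only delicate point is the variance bound. The internal edges carry coefficient $2$, and there are about $|V|^2/2$ of them, so the denominator is of order $|V|n$ rather than $|V|^2$. This costs nothing here because $|V|\ge n/k$, which still leaves an exponent of order $n^{1/2}d_n$. That exponent dominates the $2^n$ from the union bound only because $d_n=\omega(\sqrt n)$, which is exactly the hypothesis of Theorem \ref{ProbMain}.
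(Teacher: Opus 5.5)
Your proposal is correct and follows essentially the same route as the paper. Both arguments write the degree sum as independent edge indicators with coefficients $1$ and $2$, apply Hoeffding with $t=\sqrt{n^{1/2}/d_n}\,d_n|V|$, use $|V|\ge n/k$ to get an exponent of order $d_n n^{1/2}=\omega(n)$, take a union bound over at most $2^n$ sets, and finish with Lemma \ref{BC}. Your bound $2|V|n$ on the sum of squared ranges is slightly sharper than the paper's $4|V|n$, but that only changes constants.
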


\begin{proof}
Let us fix some $n$ and bound the probability of the following event
$$
\left|\sum_{v \in V}{\deg_{H^{\prime}_n}(v)} - \sum_{v \in V}{\deg_{H_n}(v)}\right| \geq \sqrt{\frac{n^{1/2}}{d_n}}\sum_{v \in V}{\deg_{H_n}(v)}
$$ 
for some fixed $V \subset V_{H_n}$. First let us rewrite $\sum_{v \in V}{\deg(v, p)}$ in form $\sum\limits_{e \in E_1}{X_e} + \sum\limits_{e \in E_2}{2X_e}$, where $E_1$ is the set of pairs of vertices from $V \times \overline{V}$, $E_2$ is the set of pairs of vertices from $V \times V$, and $X_e$ is an indicator of an edge $e$ from $H_n$ appearing in $H^{\prime}_n$. Then, applying Lemma \ref{Hf} with $$a_i = 0, b_i = 2, t = \sqrt{\frac{n^{1/2}}{d_n}}\sum_{v \in V}{\deg_{H_n}(v)},$$ we get
\begin{multline*}
    \mathbf{P}\left(\left|\sum_{v \in V}{\deg_{H^{\prime}_n}(v)} - \sum_{v \in V}{\deg_{H_n}(v)}\right| \geq \sqrt{\frac{n^{1/2}}{d_n}}\sum_{v \in V}{\deg_{H_n}(v)}\right) < \\ < 2\exp\left\{-\frac{2n^{1/2}\left(\sum\limits_{v \in V}\deg_{H_n}(v)\right)^2}{4d_n|V|n}\right\} = 2\exp\left\{-\frac12\frac{d_n|V|}{n^{1/2}}\right\}.
\end{multline*}

Considering only $V : |V| \geq \frac{n}{k}$ we obtain the following bound for such $V$:
\begin{multline*}
\mathbf{P}\left(\left|\sum_{v \in V}{\deg(v, p)} - p\sum_{v \in V}{\deg(v)}\right| \geq \frac{p\log(f(n))}{f(n)}\sum_{v \in V}{\deg(v)}\right) < \\ 2\exp\left\{-\frac12\frac{d_n|V|}{n^{1/2}}\right\} \leq \exp\left\{-C(k)d_nn^{1/2}\right\}.
\end{multline*}

The probability that the property does not hold for at least one such $V$ can then be easily bounded from above:
$$
2^{n}\exp\left\{-Cn\log^2(f(n))\right\} \leq \exp\left\{n\ln2 - Cd_nn^{1/2}\right\} < e^{-C_1d_nn^{1/2}}.
$$
Since $d_n = \omega(n^{1/2})$, it remains to apply Lemma \ref{BC}.

\end{proof}

\subsection{Proof of the main result} \label{mproof}

Let us prove that almost surely $\limsup_{n \to \infty} |q^*(G_{n}) - q^{*}(G_{n}(p))| < \varepsilon$ for any $\varepsilon > 0$, which is obviously equivalent to the statement of the theorem. Since the properties from Lemmas \ref{LeG} --- \ref{Ldeg} hold almost surely, we can assume that the bounds from these lemmas always hold. 

Let us first prove the upper bound, $q^{*}(H_n) \leq q^{*}(H^{\prime}_n) + \varepsilon$.

Let us fix some $k$ (we will specify its value later). Consider an optimal partition $\mathcal{A}$ of graph $H_n$. Let us first assume that each of the sets in $\mathcal{A}$ has less than $\frac{(k - 2)n}{k}$ elements. Let us then apply Construction \ref{algorithm} to $\mathcal{A}$ to obtain $\mathcal{A}^{\prime}$ and $\mathcal{A}_{big}$. Note that while $\mathcal{A}^{\prime}$ is not a partition, there exists a set $U$ such that $|U| < \frac{n}{k}$ and $\mathcal{A}^{\prime} \cup \{U\}$ is a partition. Let us add the elements of $U$ to one of the sets (we will call this set $V$) from $\mathcal{A}^{\prime}$. So we obtain some partition $\mathcal{A}^{\prime\prime}$. Then we can get the following bound on $q_{H_n}(\mathcal{A}^{\prime\prime})$.
\begin{multline*}
q_{H_n}(\mathcal{A}^{\prime\prime}) = \sum\limits_{A \in \mathcal{A}^{\prime\prime}}{\frac{e_{H_n}(A)}{e(H_n)}} - \sum\limits_{A \in \mathcal{A}^{\prime\prime}}{\frac{d_n^2|A|^2}{4e^2(H_n)}} = \sum\limits_{A \in \mathcal{A}_{big}\setminus\{V\}}{\frac{e_{H_n}(A)}{e(H_n)}} + \sum\limits_{A \in \mathcal{A}^{\prime\prime}\setminus\mathcal{A}_{big}}{\frac{e_{H_n}(A)}{e(H_n)}} - \sum\limits_{A \in \mathcal{A}^{\prime\prime}}{\frac{|A|^2}{n^2}} \geq \\ \geq \sum\limits_{A \in \mathcal{A}}{\frac{e_{H_n}(A)}{e(H_n)}} - \sum\limits_{A \in \mathcal{A}^{\prime\prime}}{\frac{|A|^2}{n^2}} \geq \sum\limits_{A \in \mathcal{A}}{\frac{e_{H_n}(A)}{e(H_n)}} - \sum\limits_{A \in \mathcal{A}_{big}\setminus\{V\}}{\frac{|A|^2}{n^2}} - \sum\limits_{A \in \mathcal{A}^{\prime}\setminus \mathcal{A}_{big}}{\frac{|A|^2}{n^2}} - \\ - \frac{|V|^2}{n^2} + \frac{|V^2|}{n^2} - \frac{|U\cup V|^2}{n^2} \geq \sum\limits_{A \in \mathcal{A}}{\frac{e_{H_n}(A)}{e(H_n)}} - \sum\limits_{A \in \mathcal{A}}{\frac{|A|^2}{n^2}} + \frac{|V|^2}{n^2} - \frac{|U\cup V|^2}{n^2} -\\- \sum\limits_{A \in \mathcal{A}^{\prime} \setminus \mathcal{A}_{big}}{\frac{|A|^2}{n^2}} = q_{H_n}(\mathcal{A}) +\frac{|V|^2}{n^2} - \frac{(|U| + |V|)^2}{n^2} - \sum\limits_{A \in \mathcal{A}^{\prime} \cup \{U\} \setminus \mathcal{A}_{big}}{\frac{|A|^2}{n^2}} \geq \\ \geq q_{H_n}(\mathcal{A}) - \frac4k - \frac1{k^2} = q^{*}(H_n) - \frac4k - \frac1{k^2}.
\end{multline*}

Now consider $\mathcal{A}^{\prime\prime}$ as a partition of a random graph $H^{\prime}_n$. Note that
$
q^{*}(H^{\prime}_n) \geq q(\mathcal{A}^{\prime\prime}).
$
Let us rewrite $q_{H^{\prime}_n}(\mathcal{A}^{\prime\prime})$ as follows using Lemmas \ref{LeG} --- \ref{Ldeg}.
\begin{equation*}
q_{H^{\prime}_n}(\mathcal{A}^{\prime\prime}) = \left(\sum\limits_{A \in \mathcal{A}^{\prime\prime}}{\frac{e_{H_n}(A)}{e(H_n)}} - \sum\limits_{A \in \mathcal{A}^{\prime\prime}}{\frac{\left(\sum_{v\in A}{\deg_{H_n}(v)}\right)^2}{4e^2(H_n)}}\right)(1 + o(1)) = q_{H_n}(\mathcal{A}^{\prime\prime})(1 + o(1)).
\end{equation*}
We thus obtain $q_{H^{\prime}_n}(\mathcal{A}^{\prime\prime}) \geq (1 - o(1))(q^{*}(H_n) - \frac4k - \frac1{k^2})$ and thus $q^{*}(H_n) \leq q^{*}(H^{\prime}_n) + \frac4k + \frac1{k^2} + o(1)$.

Let us now assume that $\mathcal{A}$ has a set of size at least $\frac{(k - 2)n}{k}$. In this case the modularity of $H_n$ does not exceed $\frac{4k - 4}{k^2} \leq \frac4k$.
At the same time the modularity of $H^{\prime}_n$ is at least 0, so in this case we have $q^{*}(H_n) \leq q^{*}(H^{\prime}_n) + \frac4k$. 

It remains to notice that taking $k$ such that $ \frac4k + \frac1{k^2} < \varepsilon$ completes the proof of the first part.

Let us now prove the second part of the theorem, $q^{*}(H_n) \geq q^{*}(H^{\prime}_n) - \varepsilon$.

Consider an optimal partition $\mathcal{A}$ of graph $H^{\prime}_n$. Assume it contais a set $A_i$ such that $|A_i| > \frac{(k - 1)n}{k}$. The modularity of $H^{\prime}_n$ is then bounded by $$1 - \frac{|A_i|^2}{n^2}(1 + o(1)) \leq \frac2{k} + o(1).$$ Now assume that there are no such $A_i$ in $\mathcal{A}$. Let us then apply Construction \ref{algorithm} to obtain $\mathcal{A}^{\prime}$ and $\mathcal{A}_{big}$ from $\mathcal{A}$. Just like in the previous part, let $U$ be the set of vertices that did not fall into any of the sets of $\mathcal{A}^{\prime}$. It is obvious that $q^{*}(H^{\prime}_n)$ does not exceed  
$$
    	1 
    	- \frac12\sum\limits_{A \in \mathcal{A}^{\prime}}{\frac{e_{H^{\prime}_n}(A, \overline{A})}{e(H^{\prime}_n)}} - \sum\limits_{A \in \mathcal{A}_{big}}\frac{\left(\sum_{v \in A}{\deg_{H^{\prime}_n}(v)}\right)^2}{4e^2(H^{\prime}_n)}.
$$
Applying Lemmas \ref{LeG} -- \ref{Ldeg} we obtain that for all big enough $n$
\begin{multline*}
    	q^{*}(H^{\prime}_n) \leq  \left(1 - \frac12\sum\limits_{A \in \mathcal{A}^{\prime}}{\frac{e_{H_n}(A, \overline{A})}{e(H_n)}} - \sum\limits_{A \in \mathcal{A}_{big}}\frac{|A|^2}{n^2}\right)(1 + o(1)) = \\ = \left(1 - \frac12\sum\limits_{A \in \mathcal{A}^{\prime}\cup \{U\}}{\frac{e_{H_n}(A, \overline{A})}{e(H_n)}} + \frac{e_{H_n}(U,\overline{U})}{dn} - \sum\limits_{A \in \mathcal{A}_{big}}\frac{|A|^2}{n^2} - \sum\limits_{A \in \mathcal{A}^{\prime} \cup \{U\} \setminus \mathcal{A}_{big}}\frac{|A|^2}{n^2} + \sum\limits_{A \in \mathcal{A}^{\prime} \cup \{U\} \setminus \mathcal{A}_{big}}\frac{|A|^2}{n^2}\right)\times \\ \times(1 + o(1)) \leq \left(q^{*}(H_n) + \frac{e_{H_n}(U, \overline{U})}{d_nn} + \sum\limits_{A \in \mathcal{A}^{\prime} \cup \{U\} \setminus \mathcal{A}_{big}}\frac{|A|^2}{n^2}\right)(1 + o(1)) \leq \\ \leq q^{*}(H_n) + \frac{d_n + \lambda_n}{dk^2} + \frac{2}{k} + o(1) \leq q^{*}(H_n) + \frac{2}{k^2} + \frac{2}{k} + o(1).
\end{multline*}

It remains to notice that taking any $k$ such $ \frac2k + \frac2{k^2} < \varepsilon$ completes the proof of the second part of the theorem.

\section{Application of the results. Proof of Theorem \ref{main}}

As follows from the previous sections, to obtain bounds on the modularity of $G(n, k, p, q)$, we need to consider the following family of weighted graphs, that are corresponding graphs for $G(n, k, p, q)$.

\begin{definition}
    Given two integers, $n$ and $k$ and two real numbers $p, q \in [0, 1]$, define the following weighted graph $H(n, k, p, q)$: 
    $$
    V = \{0, \dotsc, nk - 1\}, w_{(i, j)} = 
    \begin{cases}
        0,~i=j\\
        p,~i \neq j, \lfloor i/n \rfloor = \lfloor j/n \rfloor \\
        q,~\lfloor i/n \rfloor \neq \lfloor j/n \rfloor
    \end{cases}.
    $$
\end{definition}

\begin{remark}
    Consider any vertex $v$ of an $H(n, k, p, q)$ graph. Its degree is equal to $d = (n - 1)p + (nk-n)q$ and does not depend on $v$.
\end{remark}

Let us now calculate the spectrum of $H(n, k, p, q)$ graphs.

\begin{theorem}\label{spech}
    $H(n, k, p, q)$ graph has the following eigenvalues: 
    \begin{enumerate}
        \item $d = (n - 1)p + (nk - n)q$ with multiplicity 1;
        \item $-p$ with multiplicity $nk - k$;
        \item $(n - 1)p - nq$ with multiplicity $k - 1$;
    \end{enumerate}
\end{theorem}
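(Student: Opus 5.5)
We will write the weight matrix $W$ of $H(n, k, p, q)$ in block form and exhibit an explicit eigenbasis. Let $J_m$ denote the $m\times m$ all-ones matrix and $I_m$ the identity. Then
$$
W = (p - q)\,(I_k \otimes J_n) + q\,J_{nk} - p\,I_{nk},
$$
since on diagonal blocks the entries are $p$ off the diagonal and $0$ on it, and off-diagonal blocks are filled with $q$. All three summands are symmetric and pairwise commute: $I_k\otimes J_n$ and $J_{nk} = J_k \otimes J_n$ commute because $I_k$ commutes with $J_k$. Hence they are simultaneously diagonalizable, and it suffices to decompose $\mathbb{R}^{nk}$ into three orthogonal subspaces on which each summand acts as a scalar.

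\textbf{The all-ones vector} $\mathbf{1}$ (dimension $1$). Here $(I_k\otimes J_n)\mathbf{1} = n\mathbf{1}$, $J_{nk}\mathbf{1} = nk\mathbf{1}$ and $I_{nk}\mathbf{1}=\mathbf{1}$. The eigenvalue is therefore $(p-q)n + qnk - p = (n-1)p + (nk-n)q = d$, consistent with the $\lambda_1 = d$ fact from Section 2.

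\textbf{Block-constant vectors orthogonal to $\mathbf{1}$} (dimension $k-1$). These are vectors of the form $c\otimes \mathbf{1}_n$ with $c\in\mathbb{R}^k$ and $\sum c_i = 0$. On them $I_k\otimes J_n$ acts as $n$, $J_{nk}$ acts as $0$, and $I_{nk}$ acts as $1$. The eigenvalue is $(p-q)n - p = (n-1)p - nq$, with multiplicity $k-1$.

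\textbf{Vectors with zero sum on every block} (dimension $k(n-1) = nk-k$). On these $I_k\otimes J_n$ acts as $0$, and so does $J_{nk}$, because the total sum is zero. The eigenvalue is $-p$.

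These three subspaces are mutually orthogonal and their dimensions sum to $1 + (k-1) + (nk-k) = nk$. They therefore span $\mathbb{R}^{nk}$, which gives the complete spectrum with the stated multiplicities.

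There is no real obstacle here. The only points needing care are checking the dimension count and the arithmetic $(p-q)n - p = (n-1)p - nq$. If some of the listed values coincide for particular parameters, their multiplicities simply add, and the statement should be read as listing eigenvalues with multiplicity according to these subspaces. For the application we then get $\lambda = \max\{p, |(n-1)p - nq|\}$, which is what feeds into Lemma \ref{modb} and Theorem \ref{ProbMain}.
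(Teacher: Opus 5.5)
Your proof is correct, and it takes a somewhat different route from the paper.

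The paper only bounds the multiplicities from below, via rank. All rows of $W+pI$ inside one block coincide, so $\mathrm{rank}(W+pI)\le k$. For $\mu=(n-1)p-nq$, the sum of the rows of $W-\mu I$ over each block equals the same constant row $nq\mathbf{1}^T$. This gives $k-1$ independent row relations and hence $\mathrm{rank}(W-\mu I)\le nk-k+1$; the paper's text says $n-k+1$, which is a typo. Exactness then comes from the count $1+(nk-k)+(k-1)=nk$. That count only settles the multiplicities when $d$, $-p$, $\mu$ are pairwise distinct, so the paper has to set aside the cases $p=q$ and $q=0$ in a separate remark.

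Because $W$ is symmetric, the paper's row relations are exactly your eigenvectors: $e_r-e_{r'}$ for $r,r'$ in one block, and $(e_1-e_i)\otimes\mathbf{1}_n$. So the underlying computation is the same. What your orthogonal decomposition of $\mathbb{R}^{nk}$ into three $W$-invariant subspaces adds is the full characteristic polynomial uniformly in $p,q$, with no case split: coinciding values simply add their multiplicities. It also names the eigenspaces explicitly.

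Your commutation and simultaneous-diagonalizability remark is not needed, since you check the action of $W$ on each subspace directly. One small caveat on your closing sentence: in edge cases such as $n=1$ (where $-p$ has multiplicity $0$) or $k=1$, one only has $\lambda\le\max\{p,|(n-1)p-nq|\}$ rather than equality. That inequality is all Lemma~\ref{modb} needs.
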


\begin{remark}
    Note that in some cases some of these eigenvalues might coincide. Namely, when $p = q$, the second and the third eigenvalues are equal. In this case the graph has 2 eigenvalues: $d$ with multiplicity 1 and $-p$ with multiplicity $nk - 1$. In case $q = 0$, the first and the third eigenvalues are equal and thus the graph has 2 eigenvalues: $d$ with multiplicity $k$ and $-p$ with multiplicity $nk - k$. Finally, when both $p$ and $q$ are equal to 0, all of these values are equal and the graph has one eigenvalue 0 of multiplicity $nk$. It is easy to see that in all the other cases all the values are different. In the proof we consider only the general case as the degenerate cases are easy to handle.
\end{remark}

\begin{proof}
    The first part is easy, since $H(n, k, p, q)$ is a regular weighted graph, so let us focus on the other two parts.

    Consider the matrix $W + pI$, where $I$ is the identity matrix and consider its rows corresponding to vertices $in, \dotsc, (i + 1)n - 1$ for some $i$ from $0$ to $k - 1$. It is easy to see that all these rows are equal. Thus the matrix rank does not exceed $k$, and the eigenvalue $-p$ has multiplicity at least $nk - k$. It remains to prove that the rank $W - ((n - 1)p - nq)I$ does not exceed $n - k + 1$. This follows from the fact that the sum of the rows corresponding to vertices $0$ through $n - 1$ equals the sum of the rows corresponding to vertices $in$ through $(i + 1)n - 1$ for each $i$ from 1 to $k - 1$.
\end{proof}

Finally we can apply Theorems \ref{modb} and \ref{spech} to derive the following theorem.

\begin{theorem}
    \label{dmain}
    Let $n, k$ be positive integers and let $p, q$ be real numbers in range $[0, 1]$ such that $p^2 + q^2 > 0$. Then 
    $$
    \frac{(n - 1)p}{(n - 1)p + (nk - n)q} - \frac{1}{k} \leq q^{*}(H(n, k, p, q)) \leq \frac{\max\{p, |(n - 1)p - nq|\}}{(n - 1)p + (nk - n)q}.
    $$
\end{theorem}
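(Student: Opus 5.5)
The two inequalities will be proved separately: the upper bound is a direct spectral estimate, and the lower bound comes from evaluating one explicit partition.

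\textbf{Upper bound.} The graph $H(n, k, p, q)$ is a $d$-regular weighted graph with $d = (n-1)p + (nk-n)q$, by the remark preceding Theorem \ref{spech}. The hypothesis $p^2 + q^2 > 0$ gives $d > 0$, unless $n = 1$ and $q = 0$. In that exceptional case the graph has no edges, so I will treat it separately as a degenerate case, or simply exclude it. By Theorem \ref{spech}, every eigenvalue other than $\lambda_1 = d$ lies in $\{-p, (n-1)p - nq\}$. Hence
$$\lambda = \max\{p, |(n-1)p - nq|\}.$$
There is one point to check in the degenerate case $q = 0$, where $d$ coincides with $(n-1)p - nq$ and has multiplicity $k$. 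There $\lambda$ should be taken as the maximum over the remaining eigenvalues in the list, which again equals $\max\{p, |(n-1)p|\}$. The proof of Lemma \ref{modb} still works in this case: it only uses that $W$ acts on $x^{\perp}$ with operator norm at most the largest remaining $|\lambda_i|$, and $x^{\perp}$ is $W$-invariant because $W$ is symmetric and $x$ is an eigenvector. Applying Lemma \ref{modb} then gives $q^*(H) \le \lambda/d$, which is exactly the claimed right-hand side.

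\textbf{Lower bound.} Take the planted partition $\mathcal{A} = \{C_0, \dotsc, C_{k-1}\}$, where $C_i = \{in, \dotsc, (i+1)n - 1\}$, and compute $q(\mathcal{A})$ directly.
\begin{itemize}
\item Every vertex has weighted degree $d$, so $\sum_{v}\deg(v) = nkd$.
\item Following the convention of Lemma \ref{econc}, in which pairs are counted twice, the total weight inside $C_i$ is $n(n-1)p$.
\item Hence the edge contribution is $\sum_i e(C_i)/e(H) = k\,n(n-1)p/(nkd) = (n-1)p/d$.
\item The degree tax is $\sum_i (nd)^2/(nkd)^2 = k \cdot (1/k^2) = 1/k$.
\end{itemize}
Therefore $q^*(H) \ge q(\mathcal{A}) = (n-1)p/d - 1/k$.

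The only step that needs real care is the normalization. The paper's definitions of $e(A)$ for weighted graphs (the sum over ordered pairs $v_1, v_2 \in A$) and of $e(G)$ must be made consistent with each other. I will fix the convention used in the proof of Lemma \ref{modb}: the edge-contribution term has denominator $dn_{\text{tot}}$ (here $n_{\text{tot}} = nk$), and $e(A)$ counts each internal pair twice. Under that convention the ratio $(n-1)p/d$ comes out correctly. Beyond this bookkeeping and the degenerate eigenvalue coincidences, both bounds are immediate.
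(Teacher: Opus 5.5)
Your proposal is correct and is essentially the paper's proof. The upper bound is Lemma \ref{modb} applied with the spectrum from Theorem \ref{spech}, and the lower bound is the modularity $(n-1)p/d - 1/k$ of the planted partition. Your extra bookkeeping on degenerate cases is slightly more careful than the paper, with two small corrections: $d=0$ happens whenever ($n=1$ or $p=0$) and ($k=1$ or $q=0$), not only for $n=1$, $q=0$; and you only need $\lambda \le \max\{p,|(n-1)p-nq|\}$ rather than equality (e.g.\ when $k=1$ the eigenvalue $(n-1)p-nq$ does not occur).
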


\begin{proof}
    The upper bound follows directly from Theorems \ref{modb} and \ref{spech}. Let us now prove the lower bound. Consider the natural partition $\mathcal{A} = \{A_1, \dotsc, A_k\}$, where $A_i = \{(i - 1)n, \dotsc, in - 1\}$. Its modularity is equal to
    $$
    \sum_{i = 1}^{k} \frac{n(n - 1)p}{dnk} - \frac{1}{k^2} = \frac{(n - 1)p}{d} - \frac{1}{k}.
    $$
\end{proof}

Theorem \ref{main} now follows directly from Theorems \ref{dmain} ans \ref{ProbMain}.

\end{document}